\patchcmd{\@maketitle}{\artauthors}{\centerline{\artauthors}}{}{}
\theoremstyle{thmstyleone}%
\newtheorem{theorem}{Theorem}
\newtheorem{corollary}{Corollary}
\newtheorem{conjecture}{Conjecture}
\newtheorem{lemma}{Lemma}
\newtheorem{claim}{Claim}
\newtheorem{case}{Case}
\theoremstyle{thmstyletwo}%
\theoremstyle{thmstylethree}%
\begin{document}

\title[Matching-star size Ramsey numbers under connectivity constraint]{Matching-star size Ramsey numbers under connectivity constraint}


\author[1]{\fnm{Fanghua} \sur{Guo}}\email{fhguo@stu.hebtu.edu.cn}

\author*[1,2]{\fnm{Yanbo} \sur{Zhang}}\email{ybzhang@hebtu.edu.cn}

\author[3]{\fnm{Yunqing} \sur{Zhang}}\email{yunqingzh@nju.edu.cn}

\affil[1]{\orgdiv{School of Mathematical Sciences}, \orgname{Hebei Normal University}, \orgaddress{\city{Shijiazhuang}, \postcode{050024}, \state{Hebei}, \country{China}}}

\affil[2]{\orgname{Hebei International Joint Research Center for Mathematics and Interdisciplinary Science}, \orgaddress{\city{Shijiazhuang}, \postcode{050024}, \state{Hebei}, \country{China}}}

\affil[3]{\orgdiv{Department of Mathematics}, \orgname{Nanjing University}, \orgaddress{\city{Nanjing}, \postcode{210093}, \state{Jiangsu}, \country{China}}}

\abstract{Recently, Caro, Patk\'os, and Tuza (2022) introduced the concept of connected Tur\'an number. We study a similar parameter in Ramsey theory. Given two graphs $G_1$ and $G_2$, the size Ramsey number $\hat{r}(G_1,G_2)$ refers to the smallest number of edges in a graph $G$ such that for any red-blue edge-coloring of $G$, either a red subgraph $G_1$ or a blue subgraph $G_2$ is present in $G$. If we further restrict the host graph $G$ to be connected, we obtain the connected size Ramsey number, denoted as $\hat{r}_c(G_1,G_2)$. Erd\H{o}s and Faudree (1984) proved that $\hat r(nK_2,K_{1,m})=mn$ for all positive integers $m,n$. In this paper, we concentrate on the connected analog of this result. Rahadjeng, Baskoro, and Assiyatun (2016) provided the exact values of $\hat r_c(nK_2,K_{1,m})$ for $n=2,3$. We establish a more general result: for all positive integers $m$ and $n$ with $m\ge (n^2+2pn+n-3)/2$, we have $\hat r_c(nK_{1,p},K_{1,m})=n(m+p)-1$. As a corollary, $\hat r_c(nK_2,K_{1,m})=nm+n-1$ for $m\ge (n^2+3n-3)/2$. We also propose a conjecture for the interested reader.}

\keywords{Ramsey number, Connected size Ramsey number, Matching, Star}


\pacs[MSC Classification]{05C55, 05D10}

\maketitle

\section{Introduction}\label{sec1}

Given two graphs $G_1$ and $G_2$, we define $G\to (G_1,G_2)$ if, for any red-blue edge-coloring of the graph $G$, $G$ contains either a red subgraph isomorphic to $G_1$, or a blue subgraph isomorphic to $G_2$. The term ``size'' commonly refers to the number of edges in a graph. The concept of the \emph{size Ramsey number} $\hat{r}(G_1,G_2)$ was introduced by Erd\H os, Faudree, Rousseau, and Schelp~\cite{Erdoes1978size} in 1978. It represents the smallest positive integer $m$ for which there exists a graph $G$ of size $m$ satisfying $G\to (G_1,G_2)$. Size Ramsey numbers are one of the most well-established topics in graph Ramsey theory, and a comprehensive survey can be found in~\cite{Faudree2002survey}.

Determining size Ramsey numbers presents a more challenging task compared to (order) Ramsey numbers. A key focus in this field is to find the graphs $G$ for which $\hat{r}(G,G)$ exhibits linear growth with respect to $\vert V(G)\vert$. Extensive research has been conducted on this topic if $G$ is a path~\cite{Beck1983size,Dudek2017some}, a tree with bounded maximum degree~\cite{Dellamonica2012size,Haxell1995size}, a cycle~\cite{Haxell1995Induced,Javadi2019Size}, and others. On the other hand, numerous studies have aimed to determine the exact values of size Ramsey numbers, as evidenced by a variety of papers addressing this topic~\cite{Burr1978Ramsey,Davoodi2021Conjecture,Erdoes1984Size,Erdoes1978size,Faudree1983Size,Faudree1983Sizea,Harary1983Generalized,Javadi2018Question,Lortz1998Size,Lortz2021Size,Miralaei2019Size,Sheehan1984Class}. Among them, Erd\H{o}s and Faudree~\cite{Erdoes1984Size} proved that $\hat r(nK_2,K_{1,m})=mn$ for all positive integers $m,n$.

Several variants of the size Ramsey number have also been studied. In 2015, Rahadjeng, Baskoro, and Assiyatun~\cite{rahadjeng2015connected} investigated a specific variant called the connected size Ramsey number. This variant adds the condition that the host graph $G$ must be connected. More precisely, the \emph{connected size Ramsey number} $\hat{r}_c(G_1,G_2)$ represents the minimum number of edges required for a connected graph $G$ such that $G\to (G_1,G_2)$. It is evident that $\hat{r}(G_1,G_2)\leq \hat{r}_c(G_1,G_2)$, and equality holds when both $G_1$ and $G_2$ are connected graphs. However, the latter parameter presents challenges when either $G_1$ or $G_2$ is disconnected. Existing research primarily focuses on the connected size Ramsey numbers involving a matching versus a sparse graph, such as a path, a star, or a cycle.

In the context of the connected size Ramsey number of a matching versus a star, Rahadjeng, Baskoro, and Assiyatun~\cite{Rahadjeng2016Connected} derived an upper bound for $\hat r_c(nK_2,K_{1,m})$, which is $nm+n-1$. Here, a matching $nK_2$ consists of $n$ pairwise non-adjacent edges, and a star $K_{1,m}$ is a tree with $m$ edges sharing a common endpoint. They also demonstrated that this bound is tight when $n=2$ and $3$. On the other hand, when $m$ is small, this upper bound cannot be tight. For instance, Wang, Song, Zhang, and Zhang~\cite{Wang2022Connected} showed that $\hat r_c(nK_2,K_{1,2})=\left\lceil (5n-1)/2\right\rceil$. In this paper, we generalize the results of Rahadjeng, Baskoro, and Assiyatun, and determine the exact values of $\hat r_c(nK_{1,p},K_{1,m})$ for larger values of $m$, as stated by the following theorem. Here, $nK_{1,p}$ is the disjoint union of $n$ copies of $K_{1,p}$, which we call a star matching.

\begin{theorem}
	\label{thm:mainresult}
	For positive integers $m, n$, and $p$ with $m\ge (n^2+2pn+n-3)/2$, we have \[\hat r_c(nK_{1,p},K_{1,m})=n(m+p)-1.\]
\end{theorem}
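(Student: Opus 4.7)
The plan is to prove the upper and lower bounds of Theorem~\ref{thm:mainresult} separately.

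For the upper bound, I would exhibit an explicit tree $G_0$ of size $n(m+p)-1$. Take a path $v_1 v_2\cdots v_n$ and attach $m+p-1$ pendant leaves to each $v_i$. Then $G_0$ is connected and $|E(G_0)|=(n-1)+n(m+p-1)=n(m+p)-1$. To verify $G_0\to(nK_{1,p},K_{1,m})$, fix any red-blue edge coloring of $G_0$ with no blue $K_{1,m}$; then every $v_i$ has blue degree at most $m-1$, so at least $p$ of its $m+p-1$ pendant edges are red. Picking $p$ such red pendant edges at each $v_i$ produces $n$ pairwise vertex-disjoint red copies of $K_{1,p}$.

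For the lower bound, let $G$ be connected with $|E(G)|\le n(m+p)-2$; I will construct a red-blue coloring of $E(G)$ with neither a red $nK_{1,p}$ nor a blue $K_{1,m}$. The primary strategy is to find a set $W\subseteq V(G)$ of size $n-1$ with $\Delta(G-W)\le m-1$ and color red every edge incident to $W$, blue every other edge. This works because the blue subgraph equals $G-W$ (max degree $\le m-1$, so no blue $K_{1,m}$), and $W$ is a vertex cover of size $n-1$ of the red subgraph, so $n$ vertex-disjoint red $K_{1,p}$'s would require $n$ distinct $W$-vertices. I would produce $W$ greedily: start with $W=\emptyset$ and, while $\Delta(G-W)\ge m$ and $|W|<n-1$, add to $W$ a maximum-degree vertex of $G-W$; if the procedure halts early, pad $W$ to size $n-1$ and we are done.

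The main obstacle is the ``bad'' case where the greedy runs all $n-1$ steps and still $\Delta(G-W)\ge m$. Then, picking a witness $v_n$, we obtain $v_1,\dots,v_n$ with $v_k$ of maximum degree in $G_k := G-\{v_1,\dots,v_{k-1}\}$ and $d_{G_k}(v_k)\ge m$. Writing $V_n=\{v_1,\dots,v_n\}$, $Y=V(G)\setminus V_n$, $a_k=|N(v_k)\cap\{v_{k+1},\dots,v_n\}|$, $b_k=|N(v_k)\cap Y|$, and letting $e_H, e_{HY}, e_Y$ be the edges within $V_n$, between $V_n$ and $Y$, and within $Y$, the greedy condition gives $a_k+b_k\ge m$, hence $e_H+e_{HY}\ge nm$ and so $e_Y\le np-2$; the maximality of each $v_k$ bounds $d(y)\le d_{G_k}(v_k)+k-1$ for every $y\in Y$. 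I would then combine these with $|V(G)|\le |E(G)|+1$ (connectivity) and the hypothesis $m\ge(n^2+2pn+n-3)/2$ to derive $|E(G)|\ge n(m+p)-1$, contradicting the assumption. The quadratic lower bound on $m$ is needed precisely to absorb the $\mathrm{O}(n^2)$ correction terms arising from edges inside $V_n$ and from shared $Y$-neighbors of the $v_k$'s. If this direct counting falls short, I would switch to a more flexible coloring in the bad case: color all of $G[V_n]$ red together with only the minimum number of $V_n$-to-$Y$ edges at each $v_k$ needed to make its blue degree $\le m-1$; the connectedness of the red subgraph on $V_n$ then forces candidate red $K_{1,p}$'s at different $v_k$'s to share a $V_n$-vertex, keeping the red $K_{1,p}$-matching number at most $n-1$ even when the red vertex cover exceeds $n-1$.
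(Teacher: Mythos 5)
Your upper bound is correct and is essentially the paper's construction (a connected union of $n$ copies of $K_{1,m+p-1}$), and your ``greedy halts early'' case matches the paper's easy case where fewer than $n$ vertices have degree at least $m$. The gap is in the bad case, and it is a real one: the counting contradiction you aim for \emph{cannot} exist. There are connected graphs in your bad case with strictly fewer than $n(m+p)-1$ edges; for example, take $n$ stars $K_{1,m+p-2}$ and join their centers by a path ($n(m+p)-n-1$ edges, and for $p\ge 3$ every greedy sequence $v_1,\dots,v_n$ of centers still satisfies $d_{G_k}(v_k)\ge m$), or take $n$ stars $K_{1,m+p-1}$ in which consecutive stars share a leaf ($n(m+p)-n$ edges). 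For such graphs no amount of counting yields $|E(G)|\ge n(m+p)-1$; the theorem is true for them only because a good coloring exists, and that coloring must be exhibited. So the fallback coloring is not a backup plan --- it is the entire content of the hard case.

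That fallback, as stated, does not work. Coloring $G[V_n]$ red plus a ``minimum number'' of $V_n$--$Y$ edges at each $v_k$ does not control the red $K_{1,p}$-matching number: if some $v_k$ has at least $m+p-1$ private neighbours in $Y$ (which fits comfortably inside the edge budget, as in the shared-leaf example above), then forcing its blue degree below $m$ already creates $p$ red edges from $v_k$ into $Y$, and a careless choice of which edges to redden produces $n$ disjoint red stars. The claim that ``connectedness of the red subgraph on $V_n$ forces sharing'' is false in general ($G[V_n]$ may even be empty), and the correct mechanism --- arranging that every red $K_{1,p}$ centred at $v_k$ is forced to use a specific shared vertex or edge --- requires choosing the red edges with care and proving this is always possible. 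The paper resolves exactly this difficulty by a different route: induction on $n$ together with a minimal counterexample, showing $G[U]$ and $G[V]$ are empty, reducing $G$ to a tree $G'$, locating a vertex whose deletion (with its leaves) removes at least $m+p$ edges while preserving connectivity, and extending an $((n-1)K_{1,p},K_{1,m})$-coloring of the smaller graph. Your proposal contains no induction on $n$ and no substitute for this step, so the lower bound is not established.
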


When $p=1$, $nK_{1,p}$ is actually a matching with $n$ edges. We have the following corollary.
\begin{corollary}
	For positive integers $m$ and $n$ with $m\ge (n^2+3n-3)/2$, we have \[\hat r_c(nK_2,K_{1,m})=nm+n-1.\]
\end{corollary}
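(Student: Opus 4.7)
The plan is to obtain the corollary directly as the specialization of Theorem \ref{thm:mainresult} to the case $p=1$. Since $K_{1,1}$ is the tree with a single edge, i.e.\ $K_{1,1}=K_2$, the disjoint union $nK_{1,1}$ coincides with the matching $nK_2$. Consequently, the parameter $\hat r_c(nK_{1,p},K_{1,m})$ reduces to $\hat r_c(nK_2,K_{1,m})$ when we set $p=1$, and no additional graph-theoretic argument is required beyond this identification.

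Next I would verify that the hypothesis and conclusion of Theorem \ref{thm:mainresult} match those of the corollary after this substitution. Setting $p=1$ in the bound $m\ge (n^2+2pn+n-3)/2$ yields $m\ge (n^2+3n-3)/2$, which is precisely the hypothesis of the corollary. Likewise, the value $n(m+p)-1$ given by the theorem becomes $n(m+1)-1=nm+n-1$, matching the claimed formula exactly.

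Since the derivation is purely a substitution, there is no genuine obstacle to overcome; the statement is phrased as a corollary precisely because it is an immediate consequence of the main theorem. The only point worth highlighting for the reader is the identification $K_{1,1}=K_2$, which is what allows Theorem \ref{thm:mainresult} to subsume the matching-versus-star case originally considered by Rahadjeng, Baskoro, and Assiyatun~\cite{Rahadjeng2016Connected}.
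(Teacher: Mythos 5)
Your proposal is correct and matches the paper's own treatment: the corollary is stated as an immediate specialization of Theorem~\ref{thm:mainresult} at $p=1$, using exactly the identification $nK_{1,1}=nK_2$ and the substitutions you verify. Nothing further is needed.
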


It would be intriguing to enhance the lower bound of the parameter $m$ further. Specifically, we propose the following conjecture.

\begin{conjecture}
	There exists a positive constant $c$ such that for all positive integers $m$, $n$, and $p$ with $m\geq c(n+p)$, we have \[\hat r_c(nK_{1,p},K_{1,m})=n(m+p)-1.\]
\end{conjecture}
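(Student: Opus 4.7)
The plan is to prove the conjecture by combining an explicit upper-bound construction (valid for all $m\ge 1$) with a structural lower-bound argument that succeeds under a linear threshold on $m$.

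\textbf{Upper bound.} For the bound $\hat{r}_c(nK_{1,p},K_{1,m})\le n(m+p)-1$, we take the graph $G^\ast$ consisting of $n$ vertex-disjoint stars $K_{1,m+p-1}$ with centers $c_1,\ldots,c_n$ together with the path $c_1c_2\cdots c_n$; then $e(G^\ast)=n(m+p-1)+(n-1)=n(m+p)-1$. In any red-blue coloring with no blue $K_{1,m}$, each $c_i$ has blue degree at most $m-1$ and hence red degree at least $\deg_{G^\ast}(c_i)-(m-1)\ge p+1$. After discarding the at most two red path edges at $c_i$, at least $p$ red edges from $c_i$ still reach its own leaf set, and since leaf sets are pairwise disjoint we harvest $n$ vertex-disjoint red copies of $K_{1,p}$.

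\textbf{Lower bound reduction.} For the lower bound, we show that every connected graph $G$ with $e(G)\le n(m+p)-2$ and $m\ge c(n+p)$ admits a red-blue coloring avoiding red $nK_{1,p}$ and blue $K_{1,m}$. The central reduction is: producing a set $S\subseteq V(G)$ with $|S|\le n-1$ and $\Delta(G-S)\le m-1$ suffices, since the coloring that makes every edge incident to $S$ red and every other edge blue then works. Indeed, the blue subgraph is $G-S$, whose max degree is at most $m-1$, hence contains no $K_{1,m}$; and in the red subgraph, any $K_{1,p}$ has its center in $S$ or else all its $p$ leaves in $S$, so a vertex-disjoint family of $n$ such stars would need at least $n>|S|$ distinct slots in $S$, which is impossible.

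\textbf{Producing $S$.} To produce $S$, we apply the greedy procedure: set $v_i$ to be a maximum-degree vertex of $G_i:=G-\{v_1,\ldots,v_{i-1}\}$ and add it to $S$ so long as $d_{G_i}(v_i)\ge m$. The identity $|E_S|=\sum_i d_{G_i}(v_i)\ge |S|\cdot m$, coupled with $|E_S|\le e(G)\le n(m+p)-2$, yields $|S|\le n+(np-2)/m$, which is at most $n$ whenever $m\ge np$ and a fortiori under $m\ge c(n+p)$ for $c\ge p$. If $|S|\le n-1$, we are done. The critical case is $|S|=n$: here $|E_S|\ge nm$, $|E(G-S)|\le np-2$, and every $v_i\in S$ satisfies $d_G(v_i)\ge m$. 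We would then swap $S$ for $S'=S\setminus\{v_\star\}$ for some $v_\star\in S$ satisfying $d_G(v_\star)-|N(v_\star)\cap S|\le m-1$ and every neighbour $u\in V\setminus S$ of $v_\star$ satisfying $d_{G-S}(u)\le m-2$; the same coloring scheme---recolouring a few forced red edges at $v_\star$ if necessary---finishes, adding at most $p-1$ additional red edges at $v_\star$, insufficient to create a new vertex-disjoint $K_{1,p}$.

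\textbf{Main obstacle.} The chief difficulty is the sub-case of $|S|=n$ in which no such $v_\star$ exists. We expect a stability-type argument: the combined constraints $\sum_i d_G(v_i)\le n(m+p)-2+\binom{n}{2}$, $\sum_i|N(v_i)\cap S|=2|E(G[S])|$, and $|E(G-S)|\le np-2$ should force $G$ into a near-extremal structure closely resembling $G^\ast$---with many private leaves per $v_i$ and few intra-$S$ edges. For such structurally rigid $G$, an explicit coloring modelled on the one for $G^\ast$ minus one leaf edge (colour $p$ leaves red per center except at one deficient center where only $p-1$ leaves are red, and colour the intra-$S$ edges red) succeeds: the deficient center's red star would be forced to share a leaf with a neighbouring center, blocking any $nK_{1,p}$. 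Carrying out the stability analysis cleanly, covering the various near-extremal patterns, and tuning $c$ to eliminate all exceptions is the chief technical task---this is precisely what upgrades the quadratic threshold of Theorem~\ref{thm:mainresult} to the conjectured linear threshold.
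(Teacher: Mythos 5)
This statement is the paper's Conjecture, which the authors explicitly leave open (``This conjecture is beyond our ability''); the paper proves only the quadratic threshold $m\ge (n^2+2pn+n-3)/2$ of Theorem~\ref{thm:mainresult}, so there is no proof in the paper to match your proposal against, and your proposal does not close the gap either. Your upper bound is correct and is essentially the paper's construction ($n$ disjoint copies of $K_{1,m+p-1}$ made connected by $n-1$ edges), and your covering-set reduction is sound: it is the same coloring the paper uses in its Case 1 ($t\le n-1$), justified by the vertex-cover observation of Lemma~\ref{lemma}. The genuine gap sits exactly where the conjecture is hard. Your greedy bound $\vert S\vert\le n+(np-2)/m$ gives $\vert S\vert\le n$ only when $m$ is roughly at least $np$, and you bridge to the hypothesis $m\ge c(n+p)$ by taking ``$c\ge p$'' --- but the conjecture requires a single absolute constant $c$, independent of $p$ and $n$. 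Since $np$ can be quadratically larger than $n+p$ (take $n=p$), under the conjectured hypothesis the greedy set can have size as large as about $n+\min(n,p)/c$. In other words, the regime with more than $n$ vertices of degree at least $m$ --- precisely the regime the paper kills in its Case 2 \emph{using} the quadratic assumption on $m$ --- is not eliminated by your argument, and your ``critical case $\vert S\vert=n$'' does not cover it. Your route thus runs into the same quadratic barrier as the paper's proof, only disguised.

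Beyond that, the core of your plan is a declaration of intent rather than a proof: the sub-case in which no swap vertex $v_\star$ exists is handled by ``we expect a stability-type argument,'' with no derivation of the claimed near-extremal structure, no enumeration of the exceptional configurations, and no verification that the modified coloring avoids a red $nK_{1,p}$ on them. The assertion that recoloring at most $p-1$ extra red edges at $v_\star$ cannot complete a red $nK_{1,p}$ is also unjustified as stated: after the swap a red star may have its center outside $S'$ with all $p$ leaves in $S'$, and the interaction of such stars with the $n-1$ stars centered in $S'$ must be checked (this is the role Claim~\ref{centerinU} and the tree reduction to $G'$, $G''$ play in the paper's induction, none of which has an analogue in your sketch). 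As it stands, the proposal establishes at best a variant of the regime already covered by Theorem~\ref{thm:mainresult} and leaves the conjecture as open as the paper does.
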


This conjecture is beyond our ability, and we leave it to interested readers.

\section{Preliminaries and methods}\label{sec2}

A $(G_1,G_2)$-coloring of $G$ is a red-blue edge-coloring of $G$ such that there is neither a red $G_1$ nor a blue $G_2$. Therefore, the statement ``$G\not\to (G_1,G_2)$'' is equivalent to saying that there exists a $(G_1,G_2)$-coloring for the graph $G$. Throughout the discussion, we will use these two equivalent statements interchangeably multiple times. For all other terminologies, please refer to~\cite{Bondy2008Graph}.

The following folklore is needed to show the lower bound.
\begin{lemma}\label{lemma}
	For any matching $M$ and any vertex cover $S$ of a graph $G$, we have $\vert M\vert\le \vert S\vert$.
\end{lemma}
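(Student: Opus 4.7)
The plan is to construct an injective map $\phi\colon M \to S$, from which the inequality $|M|\le |S|$ follows immediately. Since $S$ is a vertex cover of $G$, every edge of $G$ has at least one endpoint in $S$; in particular this holds for each edge of the matching $M$. So first I would define $\phi$ by choosing, for each $e \in M$, a fixed endpoint $\phi(e)\in S$ (if both endpoints of $e$ lie in $S$, take either one arbitrarily).

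The next step is to verify injectivity. Suppose toward a contradiction that $\phi(e_1)=\phi(e_2)=v$ for two distinct edges $e_1,e_2 \in M$. Then $v$ is an endpoint of both $e_1$ and $e_2$, so these edges share a vertex, contradicting the fact that $M$ is a matching (its edges are pairwise vertex-disjoint). Hence $\phi$ is injective and $|M|=|\phi(M)|\le |S|$. Honestly, there is no real obstacle here; the only point one needs to check is injectivity, and it is forced by the definition of a matching. The statement is in fact one direction of the easy half of K\"onig-type duality, and holds in every graph, bipartite or not.
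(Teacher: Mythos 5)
Your proof is correct: the injection $e \mapsto \phi(e)$ from $M$ into $S$, well-defined because $S$ covers every edge and injective because matching edges are vertex-disjoint, is the standard argument for this fact. The paper itself states the lemma as folklore and gives no proof, so there is nothing to compare against; your write-up supplies exactly the canonical justification.
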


In the second half of this section, let us briefly summarize the proof techniques used in the paper. The main part of the proof lies in determining the lower bound of $\hat{r}_c(nK_{1,p},K_{1,m})$. We will induct on $n$ and employ the method of minimal counterexample. Suppose that a connected graph $G$ has at most $n(m+p)-2$ edges and satisfies $G\to(nK_{1,p},K_{1,m})$. Among all graphs $G$ that satisfy the condition, we choose one with the smallest number of edges. We partition the set $V(G)$ into two sets based on their degrees, and then discover some properties of $G$. We shall find at least $m+p$ edges in $G$ such that when these edges are removed, the graph induced by the remaining edges, denoted as $H$, is still connected. According to the induction hypothesis, $H\not\to((n-1)K_{1,p},K_{1,m})$. Building upon an $((n-1)K_{1,p},K_{1,m})$-coloring of $H$, we then color the edges in $E(G)\setminus E(H)$, and obtain an $(nK_{1,p},K_{1,m})$-coloring of $G$, leading to a contradiction.

\section{Proof of the main theorem}\label{sec3}

For any red-blue edge-coloring of $K_{1,m+p-1}$, there is either a red copy of $K_{1,p}$ or a blue copy of $K_{1,m}$. Consequently, it is easy to check that $nK_{1,m+p-1}\to (nK_{1,p},K_{1,m})$. Because $nK_{1,m+p-1}$ has $n$ connected components, we can make this graph connected by adding $n-1$ edges. Let us call this graph $G$. It has $n(m+p)-1$ edges and $G\to(nK_{1,p},K_{1,m})$. Hence the upper bound follows.

For the lower bound, let $G$ be an arbitrary connected graph with $n(m+p)-2$ edges. We aim to prove that $G\not\to (nK_{1,p},K_{1,m})$, where $m \geq (n^2+2pn+n-3)/2$. By induction on $n$. The base case $n=1$ is straightforward. For each $k\in [n-1]$, assume that any connected graph $G$ with $k(m+p)-2$ edges satisfies $G\not\to (kK_{1,p},K_{1,m})$. We will now establish the case for $k=n$. Throughout the proof, $U$ denotes the set of vertices with degree at least $m$ in $G$. Set $t=\vert U\vert $ and $V=V(G)\setminus U$.

We will structure our discussion according to the value of $t$, which naturally leads to three cases in the proof. The first two cases are relatively straightforward to analyze, while our discussion focuses on the last case.

\begin{case}
$t\le n-1$.
\end{case}

We color the edges incident to $U$ red and the remaining edges blue. Thus, $U$ forms a vertex cover of the red edges. It is worth noting that the maximum degree induced by the blue edges does not exceed $m-1$. Therefore, this coloring is an $(nK_{1,p},K_{1,m})$-coloring of $G$.

\begin{case}
	$t\ge n+1$.
\end{case}

The graph $G$ has size at least $tm-\binom{t}{2}$. This is because the number of edges with at least one end in $U$ is at least $tm-\binom{t}{2}$. Consider the function $f(t)=tm-\tbinom{t}{2}=-\frac{1}{2}{t}^{2}+(m+\frac{1}{2})t$, which the axis of symmetry is $t=m+1/2$. Note that $t\le 2n$, since if $t\ge 2n+1$ then $G$ has at least $(2n+1)m/2$ edges, contradicting the assumption that $G$ has $nm+n-2$ edges. Since $n+1\le 2n<m+1/2$, the function is minimized by the property of quadratic functions when $t=n+1$. Hence we have
		\begin{align*}	
           f(t)&\ge f(n+1)\\
		   &=(n+1)m-\frac{(n+1)n}{2}\\
           &\ge nm+\frac{{n}^{2}+2pn+n-3}{2}-\frac{(n+1)n}{2}\\
           &=nm+\frac{2pn-3}{2}\\
           &>n(m+p)-2.\\
		\end{align*}
This is a contradiction since $G$ does not have enough edges.

\begin{case}
	$t=n$.
\end{case}

In this case, we use proof by contradiction. Suppose there exists a connected graph $G$ with at most $n(m+p)-2$ edges such that $G \to (nK_{1,p},K_{1,m})$. We select $G$ to be a connected graph with the minimum number of edges that satisfies this property. In other words, for any connected graph $H$ with $e(H)<e(G)$, we always have $H \not\to (nK_{1,p},K_{1,m})$. We will characterize the properties of $G$ through some claims until we finally prove that such a $G$ does not exist.

	\begin{claim}\label{claim3}
       $G[V]$ is an empty graph.
	\end{claim}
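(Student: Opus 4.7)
The plan is to argue by contradiction: suppose $G[V]$ contains an edge $e=uv$, so that $\deg_G(u),\deg_G(v)\le m-1$. The first observation is that $e$ is ``safely removable'' from the arrow relation. Given any red-blue coloring $c'$ of $G-e$, extend $c'$ to $G$ by painting $e$ blue. Since $G\to(nK_{1,p},K_{1,m})$, the extended coloring must contain a red $nK_{1,p}$ or a blue $K_{1,m}$. A red $nK_{1,p}$ avoids the blue edge $e$ and hence already lives in $G-e$; a blue $K_{1,m}$ that uses $e$ would have to be centered at $u$ or $v$, contradicting the degree bound, so any blue $K_{1,m}$ also lies in $G-e$. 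Thus $c'$ is not a $(nK_{1,p},K_{1,m})$-coloring of $G-e$, which shows $G-e\to(nK_{1,p},K_{1,m})$.

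If $G-e$ is connected, then $G-e$ is a connected graph with strictly fewer edges that still arrows $(nK_{1,p},K_{1,m})$, contradicting the minimality of $G$. The real obstacle is the bridge case, where $G-e$ splits into two connected components $G_1\ni u$ and $G_2\ni v$, with $n_i=\lvert U\cap V(G_i)\rvert$ satisfying $n_1+n_2=n$. Define $\mu(H)=\max\{k:H\to(kK_{1,p},K_{1,m})\}$; since colorings on a disjoint union are independent, $\mu(G_1\cup G_2)=\mu(G_1)+\mu(G_2)\ge n$. For the reverse inequality, color the edges of $G_i$ incident to $U\cap V(G_i)$ red and the rest blue. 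Then each vertex of $U\cap V(G_i)$ has blue degree $0$, while every other vertex has degree below $m$ in $G$, so no blue $K_{1,m}$ appears. Any red $K_{1,p}$-packing yields a matching of the same size by choosing one edge per star, and Lemma~\ref{lemma} applied to the vertex cover $U\cap V(G_i)$ bounds this matching by $n_i$. Hence $\mu(G_i)\le n_i$, and combined with $\mu(G_1)+\mu(G_2)\ge n_1+n_2$ this forces $\mu(G_i)=n_i$ for $i=1,2$.

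To finish, if some $n_i=0$ then the other component is a connected graph with at most $e(G)-1$ edges that arrows $(nK_{1,p},K_{1,m})$, again contradicting the minimality of $G$. Otherwise $1\le n_1,n_2\le n-1$, so the inductive hypothesis gives $e(G_i)\ge n_i(m+p)-1$ for $i=1,2$, and therefore
\[
e(G)=e(G_1)+e(G_2)+1\ge n(m+p)-1,
\]
contradicting $e(G)\le n(m+p)-2$. In every subcase we reach a contradiction, so $G[V]$ contains no edge.
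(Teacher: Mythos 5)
Your proof is correct. The non-bridge case coincides with the paper's argument, just phrased contrapositively: instead of extending an $(nK_{1,p},K_{1,m})$-coloring of $G-e$ to one of $G$ by painting $e$ blue, you show that $G\to(nK_{1,p},K_{1,m})$ forces $G-e\to(nK_{1,p},K_{1,m})$; the content (a blue edge inside $G[V]$ can serve neither a red star matching nor a blue $K_{1,m}$, since any $K_{1,m}$-center must lie in $U$) is identical, and minimality of $G$ finishes it. In the bridge case you genuinely diverge. The paper parametrizes each component $G_i$ by its edge count, choosing positive integers $a_i$ with $(m+p)a_i-1\le e(G_i)\le (m+p)(a_i+1)-2$, applies the induction hypothesis to obtain an $((a_i+1)K_{1,p},K_{1,m})$-coloring of each $G_i$, and glues these (with $e$ blue) into an explicit $(nK_{1,p},K_{1,m})$-coloring of $G$. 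You instead parametrize by $n_i=\lvert U\cap V(G_i)\rvert$, pin down $\mu(G_i)=n_i$ using additivity of $\mu$ over disjoint unions together with the vertex-cover coloring and Lemma~\ref{lemma}, and then read the induction hypothesis in the opposite direction, as the edge bound $e(G_i)\ge n_i(m+p)-1$, which contradicts $e(G)\le n(m+p)-2$. Both routes rest on the same induction hypothesis; yours trades the explicit coloring construction for an edge count. A small bonus of your version is that it cleanly disposes of the degenerate situation where one component contains no vertex of $U$ (equivalently, has too few edges for a positive $a_i$ to exist), a case that the paper's insistence on positive integers $a_1,a_2$ quietly passes over; you handle it directly by minimality of $G$.
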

\begin{proof}
Recall that $V=V(G)\setminus U$. Assume that there is an edge $e$ in $G[V]$ such that $G-e$ is still connected. According to the selection of $G$ at the beginning of this case, $G-e\not\to(nK_{1,p},K_{1,m})$. Hence $G-e$ has an $(nK_{1,p},K_{1,m})$-coloring. We then color $e$ blue, which does not contribute to a blue star $K_{1,m}$ since the center of any $K_{1,m}$ must be in $U$. It follows that $G\not\to(nK_{1,p},K_{1,m})$, a contradiction.

Assume that there is an edge $e$ in $G[V]$ such that $G$ is disconnected after deleting $e$. Then $G-e$ has two components, written as $G_1$ and $G_2$. There exist two positive integers $a_1$ and $a_2$ such that 
\[(m+p)a_1-1\le \vert E(G_1)\vert \le (m+p)(a_1+1)-2,\]
\[(m+p)a_2-1\le \vert E(G_2)\vert \le (m+p)(a_2+1)-2.\]
Observe that both $a_1$ and $a_2$ are less than $n$. We have $G_1\not\to((a_1+1)K_{1,p},K_{1,m})$ and $G_2\not\to((a_2+1)K_{1,p},K_{1,m})$ by induction. That is, there exists an $((a_1+1)K_{1,p},K_{1,m})$-coloring in $G_1$ and an $((a_2+1)K_{1,p},K_{1,m})$-coloring in $G_2$. According to the coloring, there are at most $a_1+a_2$ copies of red $K_{1,p}$ in $G-e$, and there is no blue $K_{1,m}$. By adding the two inequalities, we obtain that $(m+p)(a_1+a_2)-2\le \vert E(G_1)\vert +\vert E(G_2)\vert$, which together with the fact $\vert E(G_1)\vert +\vert E(G_2)\vert \le (m+p)n-3$ implies $a_1+a_2<n$. Based on the colorings of $G_1$ and $G_2$, we color $e$ blue. This is an $(nK_{1,p},K_{1,m})$-coloring and hence $G\not\to (nK_{1,p},K_{1,m})$, a contradiciton.
\end{proof}
	\begin{claim}
       $G[U]$ is an empty graph.
	\end{claim}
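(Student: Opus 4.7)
The plan is to prove the claim by contradiction, following the scheme of Claim~\ref{claim3}. Suppose $G[U]$ contains an edge $e=u_1u_2$; I will construct an $(nK_{1,p},K_{1,m})$-coloring of $G$, contradicting the hypothesis that $G\to(nK_{1,p},K_{1,m})$. The essential new difficulty compared to Claim~\ref{claim3} is that both endpoints of $e$ now lie in $U$ and therefore have degree at least $m$. In Claim~\ref{claim3} one could simply color $e$ blue and invoke the fact that every blue $K_{1,m}$ must be centered in $U$; now coloring $e$ blue may create a blue $K_{1,m}$ at $u_1$ or $u_2$ itself, so a more delicate recoloring/swap argument is needed.

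I split into sub-cases according to whether $G-e$ is connected. If $G-e$ is connected, then by the minimality of $G$ there is an $(nK_{1,p},K_{1,m})$-coloring $c$ of $G-e$. Coloring $e$ blue extends $c$ to $G$ provided both $u_1$ and $u_2$ have blue degree at most $m-2$ in $c$. Otherwise, say $u_1$ has blue degree exactly $m-1$ in $c$; since $u_1$ has at most $n-1$ blue neighbors in $U$ and $m-1\ge n$, at least one blue edge $u_1x$ has $x\in V$. I then recolor $u_1x$ red, color $e$ blue, and handle $u_2$ analogously if its blue degree in $c$ is also $m-1$. If instead $G-e$ is disconnected with components $G_1\ni u_1$ and $G_2\ni u_2$, I mimic the disconnected sub-case of Claim~\ref{claim3}: pick integers $a_1,a_2\ge 0$ with $(m+p)a_i-1\le|E(G_i)|\le(m+p)(a_i+1)-2$ and deduce $a_1+a_2\le n-1$; by induction each $G_i$ has an $((a_i+1)K_{1,p},K_{1,m})$-coloring $c_i$; I merge $c_1$ and $c_2$, color $e$, and apply the same swap idea if required. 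In either sub-case the resulting coloring will witness $G\not\to(nK_{1,p},K_{1,m})$, contradicting our assumption.

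The main technical obstacle is to verify that the recoloring does not bring the number of vertex-disjoint red $K_{1,p}$'s up to $n$. Fixing a maximum family $\mathcal{F}$ of vertex-disjoint red $K_{1,p}$'s in the starting coloring, one has $|\mathcal{F}|\le n-1$, and by maximality every vertex has at most $p-1$ red neighbors outside $V(\mathcal{F})$. The strategy is to choose the swap endpoint $x$ to lie in $V(\mathcal{F})$ whenever this $p-1$ bound is already saturated at $u_i$, since this ensures the newly created red edge cannot form a $K_{1,p}$ disjoint from $\mathcal{F}$, and to otherwise choose $x$ of low red degree. The hypothesis $m\ge(n^2+2pn+n-3)/2$ enters exactly here: combined with $|V(\mathcal{F})|\le (n-1)(p+1)$, it gives $u_i$ enough blue edges into $V$ to make such a choice available. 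Carrying this counting out cleanly, and then checking the analogous step in the disconnected sub-case, is the heart of the proof.
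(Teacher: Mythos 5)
Your proposal diverges from the paper at the crucial point and leaves a genuine gap there. The paper colors $e$ \emph{red}, not blue, and this choice makes the difficulty you wrestle with disappear: a red edge cannot contribute to a blue $K_{1,m}$, so no swap is ever needed, and a red $nK_{1,p}$ in the extended coloring would have to use $e$, whence its remaining $n-1$ pairwise disjoint stars avoid both $u_1$ and $u_2$; since Claim~\ref{claim3} forces every edge of $G$ to meet $U$ and $\vert U\vert=n$ in this case, the set $U\setminus\{u_1,u_2\}$ of size $n-2$ would be a vertex cover of a subgraph containing a matching of size $n-1$, contradicting Lemma~\ref{lemma}. By coloring $e$ blue you create the problem of blue stars centered at $u_1$ and $u_2$ and are forced into the recoloring argument.

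That recoloring argument, as outlined, does not close. First, the maximality of one fixed family $\mathcal{F}$ only tells you that $\mathcal{F}$ itself cannot be extended; a red $nK_{1,p}$ in the recolored graph need only consist of some $n-1$ pairwise disjoint red stars of the \emph{old} coloring avoiding $u_1$ and $x$, together with one star through the new red edge $u_1x$, and nothing you know about $\mathcal{F}$ rules this out even when $x\in V(\mathcal{F})$. Second, the availability claim is backwards: a large blue degree at $u_1$ makes it easy to find a blue neighbor \emph{outside} the set $V(\mathcal{F})$, which has at most $(n-1)(p+1)$ vertices, but gives no guarantee of a blue neighbor inside it --- $u_1$ may have no neighbor in $V(\mathcal{F})$ at all, in which case your prescribed choice of $x$ does not exist, and every available choice of $x$ hands $u_1$ a $p$-th red neighbor outside $V(\mathcal{F})$, i.e., exactly the extra disjoint red star you must avoid when $\vert\mathcal{F}\vert=n-1$. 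You yourself defer ``carrying this counting out cleanly'' to the end; that deferred step is precisely where the proof lives, and the strategy you sketch for it fails as stated. Switching the color of $e$ to red and invoking Lemma~\ref{lemma} as above repairs everything, in both the connected and the disconnected sub-cases.
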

\begin{proof}
The proof of this claim is quite similar to the proof of the previous one. Assuming that an edge $e=u_1u_2$ exists in $G[U]$, we divide the discussion into two cases: when $G-e$ is connected and when $G-e$ is disconnected.

First, we assume that $G-e$ is connected. According to the assumption of $G$ at the beginning of this case, $G-e\not\to (nK_{1,p},K_{1,m})$. Based on an $(nK_{1,p},K_{1,m})$-coloring of $G-e$, we proceed to color $e$ red and obtain a coloring of $G$. If $G$ contains a red $nK_{1,p}$, it must include the edge $e$, since otherwise $G-e$ would also contain this red $nK_{1,p}$. Due to Claim~\ref{claim3}, $U\setminus \{ u_1,u_2 \}$ is a vertex cover of a red $(n-1)K_{1,p}$. However, combining Lemma~\ref{lemma} and the fact that $\vert U\setminus \{ u_1,u_2 \}\vert =n-2$, $U\setminus \{ u_1,u_2 \}$ cannot be a vertex cover for $(n-1)K_{1,p}$, a contradiction. Thus, this coloring is an $(nK_{1,p},K_{1,m})$-coloring of $G$, which contradicts the assumption that $G\to(nK_{1,p},K_{1,m})$.

Second, we assume that $G-e$ is disconnected. Then $G-e$ consists of two components, denoted as $G_1$ and $G_2$. There exist two positive integers $a_1$ and $a_2$ such that 
\[(m+p)a_1-1\le \vert E(G_1)\vert \le (m+p)(a_1+1)-2\]\[(m+p)a_2-1\le \vert E(G_2)\vert \le (m+p)(a_2+1)-2\]
Note that both $a_1$ and $a_2$ are less than $n$. By induction, we have $G_1\not\to((a_1+1)K_{1,p},K_{1,m})$ and $G_2\not\to((a_2+1)K_{1,p},K_{1,m})$. Consequently, we can find an $((a_1+1)K_{1,p},K_{1,m})$-coloring in $G_1$ and an $((a_2+1)K_{1,p},K_{1,m})$-coloring in $G_2$. With this coloring, there are at most $a_1+a_2$ copies of red $K_{1,p}$ in $G-e$, and no blue $K_{1,m}$ is present. By adding the two inequalities, we deduce $(m+p)(a_1+a_2)-2\le \vert E(G_1)\vert +\vert E(G_2)\vert$. Since $\vert E(G_1)\vert +\vert E(G_2)\vert \le (m+p)n-3$, we have $a_1+a_2<n$. Based on the coloring of $G_1$ and $G_2$, we color $e$ red. If $G$ has a red $nK_{1,p}$, it must include the edge $e$, since otherwise $G-e$ would also contain this $K_{1,p}$. By Claim~\ref{claim3}, $U\setminus \{ u_1,u_2 \}$ is a vertex cover of $(n-1)K_{1,p}$. However, combining Lemma~\ref{lemma} and the fact that $\vert U\setminus \{ u_1,u_2 \}\vert =n-2$, $U\setminus \{ u_1,u_2 \}$ cannot be a vertex cover for $(n-1)K_{1,p}$, which leads to a contradiction. Thus, this coloring is an $(nK_{1,p},K_{1,m})$-coloring of $G$, contradicting the assumption that $G\to(nK_{1,p},K_{1,m})$.
\end{proof}

We see that every edge of $G$ has one endpoint in $U$ and the other in $V$. We have the following claim for $p\ge 2$.

\begin{claim}\label{centerinU}
	When $p\ge 2$, if $G$ contains a red $nK_{1,p}$ as a subgraph, then each vertex of $U$ is a center of the red $K_{1,p}$'s.
 \end{claim}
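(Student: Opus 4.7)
The plan is to exploit the bipartite structure of $G$ already established: since both $G[U]$ and $G[V]$ are empty, every edge of $G$ joins $U$ to $V$, so $G$ is bipartite with parts $U$ and $V$.

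Suppose $G$ contains a red $nK_{1,p}$, consisting of $n$ vertex-disjoint copies of $K_{1,p}$. Because $p\ge 2$, the center of each such star is the unique vertex of degree $p$ in that star, so it is well defined. I would then classify each of the $n$ stars according to which side of the bipartition contains its center: if the center lies in $U$, the star contributes exactly one vertex to $U$ (the center itself); if instead the center lies in $V$, then by bipartiteness all $p$ leaves must lie in $U$, so the star contributes $p$ vertices to $U$.

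Let $k$ denote the number of red stars whose centers lie in $V$. Since the $n$ stars are pairwise vertex-disjoint and $|U|=n$, the total count of $U$-vertices used by them is bounded by
\[
kp + (n-k) \le n,
\]
which simplifies to $k(p-1)\le 0$. The assumption $p\ge 2$ forces $k=0$, so every center lies in $U$. Since there are $n$ distinct centers distributed among the $n$ vertices of $U$, each vertex of $U$ is the center of exactly one of the red $K_{1,p}$'s, which is the desired conclusion.

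There is no real obstacle here beyond assembling the two previous claims with a one-line counting argument; the hypothesis $p\ge 2$ is used in exactly one place, to ensure $p-1>0$ so that placing a center in $V$ is strictly more costly in $U$-vertices than placing it in $U$.
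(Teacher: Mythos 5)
Your proof is correct. It rests on the same two structural facts as the paper's (from the preceding claims, every edge of $G$ joins $U$ to $V$, and $|U|=n$), and the same core observation that a red star centered in $V$ consumes $p$ vertices of $U$ while one centered in $U$ consumes only one. The packaging differs slightly: the paper argues by contradiction, taking a single star $F$ centered in $V$, noting that the remaining $n-1$ stars would then be covered by the $n-p$ vertices of $U\setminus V(F)$, and invoking its Lemma~\ref{lemma} (a matching is no larger than any vertex cover) to rule this out; you instead do a direct global count $kp+(n-k)\le n$ over all $n$ stars, which dispenses with the vertex-cover lemma entirely and handles any number of $V$-centered stars uniformly. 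Your version is marginally more self-contained; the paper's version reuses a lemma it needs elsewhere anyway. Either way the hypothesis $p\ge 2$ enters exactly where you say it does, both to make the center well defined and to force $k=0$.
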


\begin{proof}
	Proof by contradiction. Let us assume that $G$ contains a red $nK_{1,p}$ as a subgraph, and suppose that some vertex of $U$ is not a center of the $K_{1,p}$'s. Then there exists one of the $K_{1,p}$'s, denoted as $F$, with its center located in $V$. According to Claim~\ref{claim3}, all the leaves of $F$ are in $U$. There are $n-p$ vertices in $U\setminus V(F)$. According to the two previous claims, each edge of the remaining $n-1$ copies of the red $K_{1,p}$ has one end in $U\setminus V(F)$. Consequently, the $n-p$ vertices form a vertex cover for $(n-1)K_{1,p}$, which is impossible. This validates our claim.
\end{proof}

In the following, we construct two new graphs, $G'$ and $G''$. For $u\in U$ and $v\in V$, if the graph $G$ remains connected after deleting the edge $uv$ while ensuring $u,v \in V(G)$, then we delete the edge $uv$ from $G$ and add a new edge $uv'$, where $v'$ is a new vertex. We continue performing this operation on the edges of $G$ until it can no longer be done. As a result, we have constructed a new graph, denoted by $G'$. For every edge $e=uv$, where $u\in U$ and $v\in V$, according to the above construction, $G'-e$ must be disconnected. Recall that a graph is a tree if and only if it is minimally connected. Thus, $G'$ is a tree.

It is easy to verify from the definition of $U$ and Claim~\ref{centerinU} that if $G'$ has an $(nK_{1,p},K_{1,m})$-coloring, then $G$ has a corresponding $(nK_{1,p},K_{1,m})$-coloring. Based on the assumption we made at the beginning of this case, namely $G\to(nK_{1,p}, K_{1,m})$, we can conclude that \[G'\to(nK_{1,p}, K_{1,m}).\] We still partition $V(G')$ into two parts, $U$ and $V$. If a vertex in $V(G')$ belongs to the set $U$ in the original graph $G$, we still assign it to $U$. Otherwise, we assign it to $V$.

Now we delete all the leaves from $G'$, and the resulting new graph is denoted as $G''$.
We select two vertices (not necessarily distinct), $v_1$ and $v_2$, from $V$ in such a way that the distance between $v_1$ and $v_2$ on the tree $G''$ is maximized. In other words, $d_{G''}(v_1,v_2)=\max \{ d_{G''}(v,v') \mid v,v'\in V \}$. From this, we deduce that the vertex $v_2$ has at least one neighbor that is a leaf. Moreover, due to the maximality of $d_{G''}(v_1,v_2)$, if $v_2$ has exactly one neighbor that is a leaf, then the degree of $v_2$ is two.

There are two cases in the graph $G''$. The first case is when $v_2$ has at least two neighbors that are leaves, denoted as $u_1$ and $u_2$. The second case is when $v_2$ has exactly one neighbor that is a leaf, denoted as $u_3$, and the degree of $v_2$ is two. Regarding the degrees of three vertices $u_1,u_2,u_3$ in the graph $G'$, the following claim holds.

\begin{claim}\label{degree}
	$d_{G'}(u_i)\ge m+p-1$ for $i \in [3]$. Moreover, either $d_{G'}(u_1)\ge m+p$ or $d_{G'}(u_2)\ge m+p$.
 \end{claim}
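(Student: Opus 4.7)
The plan is to argue by contradiction against $G'\to(nK_{1,p},K_{1,m})$: whenever a claimed degree bound fails, I will exhibit an explicit $(nK_{1,p},K_{1,m})$-coloring of $G'$. The coloring template is uniform and will be used in both parts of the claim: for every $w\in U$, color $d_{G'}(w)-(m-1)$ of its incident edges red and the remaining $m-1$ blue. With this template the blue degree at every $w\in U$ equals $m-1<m$, and the blue degree at every $v\in V$ is at most $d_{G'}(v)<m$ because the construction of $G'$ from $G$ only decreases (or preserves) the degree of each $V$-vertex and creates new leaves. Hence the template by itself already excludes a blue $K_{1,m}$, and the real task is to block a red $nK_{1,p}$.

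For the first part, the case $p=1$ is immediate because $u_i\in U$ forces $d_{G'}(u_i)\ge m=m+p-1$. For $p\ge 2$, I would assume $d_{G'}(u_i)\le m+p-2$ for some $i\in[3]$ and apply the template. The red degree at $u_i$ is at most $p-1$, so $u_i$ cannot center any red $K_{1,p}$. Claim~\ref{centerinU} (whose proof carries over verbatim to $G'$, since $G'$ is still bipartite between $U$ and $V$) says that in any red $nK_{1,p}$ every vertex of $U$ must serve as a center, so no red $nK_{1,p}$ exists, a contradiction.

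For the ``moreover'' part I am in Case~1. Suppose $d_{G'}(u_1),d_{G'}(u_2)\le m+p-1$; together with the first part this forces equality. Apply the template to $G'$, but additionally insist that the edges $u_1v_2$ and $u_2v_2$ (which exist by the Case~1 assumption) be among the red edges at $u_1$ and $u_2$ respectively; this is legal since each of $u_1,u_2$ has exactly $p$ red slots. For $p\ge 2$, $u_1$'s red degree is exactly $p$, so any red $K_{1,p}$ centered at $u_1$ must use all $p$ red edges at $u_1$, and in particular has $v_2$ as a leaf; the same holds at $u_2$. Therefore the red $K_{1,p}$'s centered at $u_1$ and $u_2$ cannot both appear in a vertex-disjoint red $nK_{1,p}$, contradicting Claim~\ref{centerinU}. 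For $p=1$ each of $u_1,u_2$ has exactly one red edge (namely $u_iv_2$), so any red matching uses at most one of these two edges and hence covers at most $n-1$ of the vertices in $U$, ruling out a red $nK_2$.

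The main obstacle is calibrating the coloring: the blue budget $m-1$ per $U$-vertex is forced by the $K_{1,m}$ constraint, while the residual red budget $d_{G'}(w)-(m-1)$ has to be too small at $u_i$ to form a red $K_{1,p}$ in Part~1, and just small enough at $u_1$ and $u_2$ to make the common neighbor $v_2$ unavoidable as a leaf in Part~2. The $p=1$ subcase falls outside the scope of Claim~\ref{centerinU} and is handled by the direct matching argument above, but is structurally identical.
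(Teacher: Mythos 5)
Your proof is correct and follows essentially the same route as the paper's: exhibit an explicit coloring in which every vertex has blue degree below $m$, the offending vertex $u_i$ has red degree at most $p-1$ (respectively, $u_1$ and $u_2$ each have red degree exactly $p$ with $u_1v_2$ and $u_2v_2$ forced red), and then invoke Claim~\ref{centerinU} for $p\ge 2$ and a direct matching/vertex-cover argument for $p=1$. The only difference is cosmetic: the paper colors blue only $m-1$ leaf-edges at the offending vertices (splitting into subcases according to how many leaves they have) and makes everything else red, whereas you use a uniform template giving every $U$-vertex exactly $m-1$ blue edges and verify separately that no $V$-vertex can be a blue center; both work.
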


 \begin{proof}
	Suppose there exists $i \in [3]$ such that $d_{G'}(u_i)\le m+p-2$. In the graph $G'$, if $u_i$ has at least $m-1$ leaves, we color the edges between $u_i$ and exactly $m-1$ leaves as blue, and all other edges in $G'$ as red. On the other hand, if $u_i$ has at most $m-2$ leaves, we color all edges incident to $u_i$ as blue, and all other edges in $G'$ as red. This graph has at most $m-1$ blue edges, hence it does not contain a blue $K_{1,m}$. Since $u_i$ is incident to at most $p-1$ red edges, $u_i$ cannot be the center of a red $K_{1,p}$. According to Claim~\ref{centerinU}, $G'$ does not contain a red $nK_{1,p}$ for $p\ge 2$. It is easy to verify that the same conclusion holds for $p=1$. Therefore, $G'\not\to(nK_{1,p},K_{1,m})$. This contradiction demonstrates that $d_{G'}(u_i)\ge m+p-1$ for $i \in [3]$.
	
	Suppose $d_{G'}(u_1)=d_{G'}(u_2)=m+p-1$. In the graph $G'$, we select $m-1$ leaves of $u_1$ and color the edges between $u_1$ and these $m-1$ leaves as blue. Similarly, we select $m-1$ leaves of $u_2$ and color the edges between $u_2$ and these $m-1$ leaves as blue. Then, we color all other edges in $G'$ as red. It is noteworthy that there are exactly $p$ red edges incident to $u_1$ and $p$ red edges incident to $u_2$. However, since both $u_1v_2$ and $u_2v_2$ are red edges, and $nK_{1,p}$ represents the disjoint union of $n$ copies of $K_{1,p}$, it follows that $u_1$ and $u_2$ cannot be the centers of different $K_{1,p}$ components in $nK_{1,p}$. According to Claim~\ref{centerinU}, $G'$ does not contain a red $nK_{1,p}$ for $p\ge 2$. The same conclusion holds for $p=1$. Hence, we have $G'\not\to(nK_{1,p},K_{1,m})$, which contradicts our assumption. Consequently, either $d_{G'}(u_1)\ge m+p$ or $d_{G'}(u_2)\ge m+p$.
 \end{proof}

If there exists $i\in [3]$ such that $d_{G'}(u_i)\geq m+p$, without loss of generality, assume that $d_{G'}(u_1)\ge m+p$. We delete the vertex $u_1$ and its adjacent leaves from $G'$, resulting in a connected graph. By the induction hypothesis, $G'\setminus \{ u_1 \}\not\to ((n-1)K_{1,p}, K_{1,m})$. Building upon the $((n-1)K_{1,p}, K_{1,m})$-coloring of $G'\setminus \{ u_1 \}$, we color all edges incident to $u_1$ in red. This gives us an $(nK_{1,p}, K_{1,m})$-coloring of $G'$, which is a contradiction.

According to Claim~\ref{degree}, only one case remains: $d_{G'}(u_3)=m+p-1$, $v_2$ is adjacent to $u_3$, and $v_2$ has degree two in $G'$. We remove $v_2$, $u_3$, and the adjacent leaves of $u_3$ from $G'$. In this way, the resulting graph remains connected. Note that we have removed $m+p$ edges during this process. By the induction hypothesis, $G'\setminus \{ v_2, u_3 \}\not\to ((n-1)K_{1,p}, K_{1,m})$. Expanding upon the $((n-1)K_{1,p}, K_{1,m})$-coloring of $G'\setminus \{ v_2, u_3 \}$, we establish an $(nK_{1,p}, K_{1,m})$-coloring of $G'$ as follows: we select $m-1$ leaves of $u_3$ and color the edges between $u_3$ and these $m-1$ leaves as blue. Subsequently, we assign red color to the remaining edges connected to $u_3$ and $v_2$. Since $u_3$ is precisely incident to $p$ red edges, if $u_3$ is the center of a red $K_{1,p}$, then $v_2$ must also be a vertex of this $K_{1,p}$ and cannot belong to any other red $K_{1,p}$. This results in an $(nK_{1,p}, K_{1,m})$-coloring of $G'$, which contradicts our assumption. This completes the proof of the main theorem.

\bmhead{Acknowledgments}

The second author was partially supported by NSFC under grant numbers 11601527, while
the third author was partially supported by NSFC under grant numbers 12161141003 and 12271246.





\begin{thebibliography}{21}
	\ifx \bisbn   \undefined \def \bisbn  #1{ISBN #1}\fi
	\ifx \binits  \undefined \def \binits#1{#1}\fi
	\ifx \bauthor  \undefined \def \bauthor#1{#1}\fi
	\ifx \batitle  \undefined \def \batitle#1{#1}\fi
	\ifx \bjtitle  \undefined \def \bjtitle#1{#1}\fi
	\ifx \bvolume  \undefined \def \bvolume#1{\textbf{#1}}\fi
	\ifx \byear  \undefined \def \byear#1{#1}\fi
	\ifx \bissue  \undefined \def \bissue#1{#1}\fi
	\ifx \bfpage  \undefined \def \bfpage#1{#1}\fi
	\ifx \blpage  \undefined \def \blpage #1{#1}\fi
	\ifx \burl  \undefined \def \burl#1{\textsf{#1}}\fi
	\ifx \doiurl  \undefined \def \doiurl#1{\url{https://doi.org/#1}}\fi
	\ifx \betal  \undefined \def \betal{\textit{et al.}}\fi
	\ifx \binstitute  \undefined \def \binstitute#1{#1}\fi
	\ifx \binstitutionaled  \undefined \def \binstitutionaled#1{#1}\fi
	\ifx \bctitle  \undefined \def \bctitle#1{#1}\fi
	\ifx \beditor  \undefined \def \beditor#1{#1}\fi
	\ifx \bpublisher  \undefined \def \bpublisher#1{#1}\fi
	\ifx \bbtitle  \undefined \def \bbtitle#1{#1}\fi
	\ifx \bedition  \undefined \def \bedition#1{#1}\fi
	\ifx \bseriesno  \undefined \def \bseriesno#1{#1}\fi
	\ifx \blocation  \undefined \def \blocation#1{#1}\fi
	\ifx \bsertitle  \undefined \def \bsertitle#1{#1}\fi
	\ifx \bsnm \undefined \def \bsnm#1{#1}\fi
	\ifx \bsuffix \undefined \def \bsuffix#1{#1}\fi
	\ifx \bparticle \undefined \def \bparticle#1{#1}\fi
	\ifx \barticle \undefined \def \barticle#1{#1}\fi
	\bibcommenthead
	\ifx \bconfdate \undefined \def \bconfdate #1{#1}\fi
	\ifx \botherref \undefined \def \botherref #1{#1}\fi
	\ifx \url \undefined \def \url#1{\textsf{#1}}\fi
	\ifx \bchapter \undefined \def \bchapter#1{#1}\fi
	\ifx \bbook \undefined \def \bbook#1{#1}\fi
	\ifx \bcomment \undefined \def \bcomment#1{#1}\fi
	\ifx \oauthor \undefined \def \oauthor#1{#1}\fi
	\ifx \citeauthoryear \undefined \def \citeauthoryear#1{#1}\fi
	\ifx \endbibitem  \undefined \def \endbibitem {}\fi
	\ifx \bconflocation  \undefined \def \bconflocation#1{#1}\fi
	\ifx \arxivurl  \undefined \def \arxivurl#1{\textsf{#1}}\fi
	\csname PreBibitemsHook\endcsname
	
	\bibitem{Beck1983size}
	\begin{barticle}
		\bauthor{\bsnm{Beck}, \binits{J.}}:
		\batitle{On size {Ramsey} number of paths, trees, and circuits. {I}}.
		\bjtitle{J. Graph Theory}
		\bvolume{7}(\bissue{1}),
		\bfpage{115}--\blpage{129}
		(\byear{1983}).
		\doiurl{10.1002/jgt.3190070115}
	\end{barticle}
	\endbibitem
	
	\bibitem{Bondy2008Graph}
	\begin{bbook}
		\bauthor{\bsnm{Bondy}, \binits{J.A.}},
		\bauthor{\bsnm{Murty}, \binits{U.S.R.}}:
		\bbtitle{Graph Theory}.
		\bpublisher{Springer}, (\byear{2008}).
		\doiurl{10.1007/978-1-84628-970-5}
	\end{bbook}
	\endbibitem
	
	\bibitem{Burr1978Ramsey}
	\begin{barticle}
		\bauthor{\bsnm{Burr}, \binits{S.A.}},
		\bauthor{\bsnm{Erd{\H{o}}s}, \binits{P.}},
		\bauthor{\bsnm{Faudree}, \binits{R.J.}},
		\bauthor{\bsnm{Rousseau}, \binits{C.C.}},
		\bauthor{\bsnm{Schelp}, \binits{R.H.}}:
		\batitle{Ramsey-minimal graphs for multiple copies}.
		\bjtitle{Indagationes Mathematicae (Proceedings)}
		\bvolume{81}(\bissue{2}),
		\bfpage{187}--\blpage{195}
		(\byear{1978}).
		\doiurl{10.1016/1385-7258(78)90036-7}
	\end{barticle}
	\endbibitem

	\bibitem{Caro2022Connected}
	\begin{barticle}
		\bauthor{\bsnm{Caro}, \binits{Y.}},
		\bauthor{\bsnm{Patk\'os}, \binits{B.}},
		\bauthor{\bsnm{Tuza}, \binits{Z.}}:
		\batitle{Connected Tur\'an number of trees}.
		\bjtitle{arXiv preprint, arXiv:2208.06126}
		(\byear{2022}).
		{\href{https://arxiv.org/abs/2208.06126}{{arXiv:2208.06126}}}
	\end{barticle}
	\endbibitem
	
	\bibitem{Davoodi2021Conjecture}
	\begin{barticle}
		\bauthor{\bsnm{Davoodi}, \binits{A.}},
		\bauthor{\bsnm{Javadi}, \binits{R.}},
		\bauthor{\bsnm{Kamranian}, \binits{A.}},
		\bauthor{\bsnm{Raeisi}, \binits{G.}}:
		\batitle{On a conjecture of Erd{\H{o}}s on size {R}amsey number of star forests}.
		\bjtitle{arXiv preprint, arXiv:2111.02065}
		(\byear{2021}).
		{\href{https://arxiv.org/abs/2111.02065}{{arXiv:2111.02065}}}
	\end{barticle}
	\endbibitem
	
	\bibitem{Dellamonica2012size}
	\begin{barticle}
		\bauthor{\bsnm{{Dellamonica Jr.}}, \binits{D.}}:
		\batitle{The size-{Ramsey} number of trees}.
		\bjtitle{Random Struct. Algor.}
		\bvolume{40}(\bissue{1}),
		\bfpage{49}--\blpage{73}
		(\byear{2012}).
		\doiurl{10.1002/rsa.20363}
	\end{barticle}
	\endbibitem
	
	\bibitem{Dudek2017some}
	\begin{barticle}
		\bauthor{\bsnm{Dudek}, \binits{A.}},
		\bauthor{\bsnm{Pra{\l}at}, \binits{P.}}:
		\batitle{On some multicolor {Ramsey} properties of random graphs}.
		\bjtitle{SIAM J. Discrete Math.}
		\bvolume{31}(\bissue{3}),
		\bfpage{2079}--\blpage{2092}
		(\byear{2017}).
		\doiurl{10.1137/16m1069717}
	\end{barticle}
	\endbibitem
	
	\bibitem{Erdoes1978size}
	\begin{barticle}
		\bauthor{\bsnm{Erd{\H{o}}s}, \binits{P.}},
		\bauthor{\bsnm{Faudree}, \binits{R.J.}},
		\bauthor{\bsnm{Rousseau}, \binits{C.C.}},
		\bauthor{\bsnm{Schelp}, \binits{R.H.}}:
		\batitle{The size {Ramsey} number}.
		\bjtitle{Period. Math. Hungar.}
		\bvolume{9}(\bissue{1-2}),
		\bfpage{145}--\blpage{161}
		(\byear{1978}).
		\doiurl{10.1007/bf02018930}
	\end{barticle}
	\endbibitem
	
	\bibitem{Erdoes1984Size}
	\begin{bchapter}
		\bauthor{\bsnm{Erd{\H{o}}s}, \binits{P.}},
		\bauthor{\bsnm{Faudree}, \binits{R.J.}}:
		\bctitle{Size {Ramsey} numbers involving matchings}.
		In: \bbtitle{Finite and Infinite Sets},
		pp. \bfpage{247}--\blpage{264}.
		\bpublisher{Elsevier},
		\blocation{Amsterdam, The Netherlands}
		(\byear{1984}).
		\doiurl{10.1016/b978-0-444-86893-0.50019-x}
	\end{bchapter}
	\endbibitem
	
	\bibitem{Faudree2002survey}
	\begin{bchapter}
		\bauthor{\bsnm{Faudree}, \binits{R.J.}},
		\bauthor{\bsnm{Schelp}, \binits{R.H.}}:
		\bctitle{A survey of results on the size {R}amsey number}.
		In: \bbtitle{Paul Erd{\H{o}}s and His Mathematics, II},
		pp. \bfpage{291}--\blpage{309}.
		\bpublisher{Springer}, \blocation{Berlin}
		(\byear{2002})
	\end{bchapter}
	\endbibitem
	
	\bibitem{Faudree1983Size}
	\begin{barticle}
		\bauthor{\bsnm{Faudree}, \binits{R.J.}},
		\bauthor{\bsnm{Sheehan}, \binits{J.}}:
		\batitle{Size {Ramsey} numbers for small-order graphs}.
		\bjtitle{J. Graph Theory}
		\bvolume{7}(\bissue{1}),
		\bfpage{53}--\blpage{55}
		(\byear{1983}).
		\doiurl{10.1002/jgt.3190070107}
	\end{barticle}
	\endbibitem
	
	\bibitem{Faudree1983Sizea}
	\begin{barticle}
		\bauthor{\bsnm{Faudree}, \binits{R.J.}},
		\bauthor{\bsnm{Sheehan}, \binits{J.}}:
		\batitle{Size {Ramsey} numbers involving stars}.
		\bjtitle{Discrete Math.}
		\bvolume{46}(\bissue{2}),
		\bfpage{151}--\blpage{157}
		(\byear{1983}).
		\doiurl{10.1016/0012-365x(83)90248-0}
	\end{barticle}
	\endbibitem
	
	\bibitem{Harary1983Generalized}
	\begin{bchapter}
		\bauthor{\bsnm{Harary}, \binits{F.}},
		\bauthor{\bsnm{Miller}, \binits{Z.}}:
		\bctitle{Generalized {Ramsey} theory {VIII}. the size {Ramsey} number of small
			graphs}.
		In: \bbtitle{Studies in Pure Mathematics},
		pp. \bfpage{271}--\blpage{283}.
		\bpublisher{Birkh{\"a}user},
		\blocation{Basel, Switzerland}
		(\byear{1983}).
		\doiurl{10.1007/978-3-0348-5438-2_25}
	\end{bchapter}
	\endbibitem
	
	\bibitem{Haxell1995size}
	\begin{barticle}
		\bauthor{\bsnm{Haxell}, \binits{P.E.}},
		\bauthor{\bsnm{Kohayakawa}, \binits{Y.}}:
		\batitle{The size-{Ramsey} number of trees}.
		\bjtitle{Israel J. Math.}
		\bvolume{89}(\bissue{1-3}),
		\bfpage{261}--\blpage{274}
		(\byear{1995}).
		\doiurl{10.1007/bf02808204}
	\end{barticle}
	\endbibitem
	
	\bibitem{Haxell1995Induced}
	\begin{barticle}
		\bauthor{\bsnm{Haxell}, \binits{P.E.}},
		\bauthor{\bsnm{Kohayakawa}, \binits{Y.}},
		\bauthor{\bsnm{{\L}uczak}, \binits{T.}}:
		\batitle{The induced size-{Ramsey} number of cycles}.
		\bjtitle{Combin. Probab. Comput.}
		\bvolume{4}(\bissue{3}),
		\bfpage{217}--\blpage{239}
		(\byear{1995}).
		\doiurl{10.1017/s0963548300001619}
	\end{barticle}
	\endbibitem
	
	\bibitem{Javadi2019Size}
	\begin{barticle}
		\bauthor{\bsnm{Javadi}, \binits{R.}},
		\bauthor{\bsnm{Khoeini}, \binits{F.}},
		\bauthor{\bsnm{Omidi}, \binits{G.R.}},
		\bauthor{\bsnm{Pokrovskiy}, \binits{A.}}:
		\batitle{On the size-{Ramsey} number of cycles}.
		\bjtitle{Combin. Probab. Comput.}
		\bvolume{28}(\bissue{6}),
		\bfpage{871}--\blpage{880}
		(\byear{2019}).
		\doiurl{10.1017/s0963548319000221}
	\end{barticle}
	\endbibitem
	
	\bibitem{Javadi2018Question}
	\begin{barticle}
		\bauthor{\bsnm{Javadi}, \binits{R.}},
		\bauthor{\bsnm{Omidi}, \binits{G.}}:
		\batitle{On a question of {E}rd{\H{o}}s and {F}audree on the size {Ramsey}
			numbers}.
		\bjtitle{SIAM J. Discrete Math.}
		\bvolume{32}(\bissue{3}),
		\bfpage{2217}--\blpage{2228}
		(\byear{2018}).
		\doiurl{10.1137/17m1115022}
	\end{barticle}
	\endbibitem
	
	\bibitem{Lortz1998Size}
	\begin{barticle}
		\bauthor{\bsnm{Lortz}, \binits{R.}},
		\bauthor{\bsnm{Mengersen}, \binits{I.}}:
		\batitle{Size {Ramsey} results for paths versus stars}.
		\bjtitle{Australas. J. Combin.}
		\bvolume{18},
		\bfpage{3}--\blpage{12}
		(\byear{1998})
	\end{barticle}
	\endbibitem
	
	\bibitem{Lortz2021Size}
	\begin{barticle}
		\bauthor{\bsnm{Lortz}, \binits{R.}},
		\bauthor{\bsnm{Mengersen}, \binits{I.}}:
		\batitle{Size {Ramsey} results for the path of order three}.
		\bjtitle{Graphs Combin.}
		\bvolume{37}(\bissue{6}),
		\bfpage{2315}--\blpage{2331}
		(\byear{2021}).
		\doiurl{10.1007/s00373-021-02398-3}
	\end{barticle}
	\endbibitem
	
	\bibitem{Miralaei2019Size}
	\begin{barticle}
		\bauthor{\bsnm{Miralaei}, \binits{M.}},
		\bauthor{\bsnm{Omidi}, \binits{G.R.}},
		\bauthor{\bsnm{Shahsiah}, \binits{M.}}:
		\batitle{Size {Ramsey} numbers of stars versus cliques}.
		\bjtitle{J. Graph Theory}
		\bvolume{92}(\bissue{3}),
		\bfpage{275}--\blpage{286}
		(\byear{2019}).
		\doiurl{10.1002/jgt.22453}
	\end{barticle}
	\endbibitem
	
	\bibitem{rahadjeng2015connected}
	\begin{barticle}
		\bauthor{\bsnm{Rahadjeng}, \binits{B.}},
		\bauthor{\bsnm{Baskoro}, \binits{E.T.}},
		\bauthor{\bsnm{Assiyatun}, \binits{H.}}:
		\batitle{Connected size {Ramsey} numbers for matchings versus cycles or paths}.
		\bjtitle{Procedia Comput. Sci.}
		\bvolume{74},
		\bfpage{32}--\blpage{37}
		(\byear{2015}).
		\doiurl{10.1016/j.procs.2015.12.071}
	\end{barticle}
	\endbibitem
	
	\bibitem{Rahadjeng2016Connected}
	\begin{bchapter}
		\bauthor{\bsnm{Rahadjeng}, \binits{B.}},
		\bauthor{\bsnm{Baskoro}, \binits{E.T.}},
		\bauthor{\bsnm{Assiyatun}, \binits{H.}}:
		\bctitle{Connected size {Ramsey} numbers of matchings and stars}.
		In: \bbtitle{{AIP} Conference Proceedings},
		vol. \bseriesno{1707},
		pp. \bfpage{020015-1}--\blpage{020015-4}.
		\bpublisher{{AIP} Publishing {LLC}}
		(\byear{2016}).
		\doiurl{10.1063/1.4940816}
	\end{bchapter}
	\endbibitem
	
	\bibitem{Sheehan1984Class}
	\begin{bchapter}
		\bauthor{\bsnm{Sheehan}, \binits{J.}},
		\bauthor{\bsnm{Faudree}, \binits{R.J.}},
		\bauthor{\bsnm{Rousseau}, \binits{C.C.}}:
		\bctitle{A class of size {Ramsey} problems involving stars}.
		In: \beditor{\bsnm{Bollob{\'a}s}, \binits{B.}} (ed.)
		\bbtitle{Graph Theory and Combinatorics},
		pp. \bfpage{273}--\blpage{281}.
		\bpublisher{Cambridge},
		\blocation{London}
		(\byear{1984})
	\end{bchapter}
	\endbibitem
	
	\bibitem{Wang2022Connected}
	\begin{barticle}
		\bauthor{\bsnm{Wang}, \binits{S.}},
		\bauthor{\bsnm{Song}, \binits{R.}},
		\bauthor{\bsnm{Zhang}, \binits{Y.}},
		\bauthor{\bsnm{Zhang}, \binits{Y.}}:
		\batitle{Connected size {Ramsey} numbers of matchings versus a small path or cycle}. \bjtitle{arXiv preprint, arXiv:2205.03965}
		(\byear{2022}).
		{\href{https://arxiv.org/abs/2205.03965}{{arXiv:2205.03965}}}
	\end{barticle}
	\endbibitem
	
\end{thebibliography}
\end{document}